\begin{document}
\newtheorem{theorem}{Theorem}[section]
\newtheorem{lemma}[theorem]{Lemma}
\newtheorem{corollary}[theorem]{Corollary}
\newtheorem{prop}[theorem]{Proposition}
\newtheorem{definition}[theorem]{Definition}
\newtheorem{remark}[theorem]{Remark}

 \def\ad#1{\begin{aligned}#1\end{aligned}}  \def\b#1{{\bf #1}} \def\hb#1{\hat{\bf #1}}
\def\a#1{\begin{align*}#1\end{align*}} \def\an#1{\begin{align}#1\end{align}}
\def\e#1{\begin{equation}#1\end{equation}} \def\tx#1{\hbox{\rm{#1}}}\def\t#1{\operatorname{#1}}
\def\dt#1{\left|\begin{matrix}#1\end{matrix}\right|}
\def\p#1{\begin{pmatrix}#1\end{pmatrix}} \def\c{\operatorname{curl}}
 \def\vc{\nabla\times } \numberwithin{equation}{section}
 \def\la{\circle*{0.25}}
\def\boxit#1{\vbox{\hrule height1pt \hbox{\vrule width1pt\kern1pt
     #1\kern1pt\vrule width1pt}\hrule height1pt }}
 \def\lab#1{\boxit{\small #1}\label{#1}}
  \def\mref#1{\boxit{\small #1}\ref{#1}}
 \def\meqref#1{\boxit{\small #1}\eqref{#1}}
\long\def\comment#1{}

\def\lab#1{\label{#1}} \def\mref#1{\ref{#1}} \def\meqref#1{\eqref{#1}}

\def\bg#1{{\pmb #1}} 

\title  [$C^1$ serendipity elements]
   {$C^1$-$Q_k$ serendipity finite elements on rectangular meshes}

\author { Shangyou Zhang }
\address{Department of Mathematical  Sciences, University of Delaware, Newark, DE 19716, USA.}
\email{ szhang@udel.edu }

\date{}

\begin{abstract}
A $C^1$-$Q_k$ serendipity finite element is a sub-element of $C^1$-$Q_k$ 
   BFS finite element such that the element remains $C^1$-continuous and includes all
     $P_k$ polynomials.
In other words, it is a minimum of $Q_k$ bubbles enriched $P_k$ finite element.
We enrich the $P_4$ and $P_5$ spaces by $9$ $Q_4$ and $11$ $Q_5$-bubble functions, respectively.
For all $k\ge 6$, we enrich the $P_k$ spaces exactly by $12$ $Q_k$ bubble functions.
We show the uni-solvence and quasi-optimality of the newly defined $C^1$-$Q_k$ serendipity elements. 
Numerical experiments by the $C^1$-$Q_k$ serendipity elements, $4\le k\le 8$, are performed.
  
\end{abstract}

\vskip .3cm

\keywords{  biharmonic equation; conforming element; macro element,
    finite element; quadrilateral mesh. }

\subjclass[2010]{ 65N15, 65N30 }

\maketitle

\section{Introduction} 
The finite element methods became popular after some engineers and mathematicians
   started the constructions for the following biharmonic equation, ie. 
   the plate bending equation,
\an{\label{bi} \ad{ \Delta^2 u & = f \quad \t{in } \ \Omega, \\
        u=\partial_{\b n} u & =0  \quad \t{on } \ \partial\Omega, } }
where $\Omega$ is a polygonal domain in 2D, and $\b n$ is a normal vector.
We mention some important constructions in the early days,  
 the $C^1$-$P_3$ Hsieh-Clough-Tocher element (1961,1965) \cite{Ciarlet,Clough}, 
  the $C^1$-$P_3$ Fraeijs de Veubeke-Sander element (1964,1965) \cite{Fraeijs,Fraeijs68,Sander}  
 the $C^1$-$P_5$  Argyris element (1968) \cite{Argyris},
  the $C^1$-$P_4$ Bell element (1969) \cite{Bell},
   the $C^1$-$Q_3$ Bogner-Fox-Schmit element (1965) \cite{Bogner},  and
 the $P_2$ nonconforming Morley  element (1969) \cite{morley}.
 
The $C^1$-$P_3$ Hsieh-Clough-Tocher element was extended to the $C^1$-$P_k$ ($k\ge 3$) 
   finite elements in \cite{Douglas,ZhangMG}.
The $C^1$-$P_5$  Argyris element was extended to the family of 
   $C^1$-$P_k$ ($k\ge 5$) finite elements in \cite{Zen70,Zlamal}. 
The $C^1$-$P_5$  Argyris element was modified and extended to the family of  
   $C^1$-$P_k$ ($k\ge 5$) full-space finite elements in \cite{Morgan-Scott}.
The $C^1$-$P_5$  Argyris element was also extended to 3D $C^1$-$P_k$ ($k\ge 9$)
   elements on tetrahedral meshes in \cite{Zenisek,Z3d,Z4d}.
The $C^1$-$P_4$ Bell element was extended to three families of 
    $C^1$-$P_{2m+1}$ ($m\ge 3$) finite elements in \cite{Xu-Zhang7,Xu-Zhang}. 
The Bell finite elements do not have any degrees of freedom on edges. 
Thus they must be odd-degree polynomials 
  (the $P_4$ Bell element is a subspace of $P_5$ polynomials.)
The $C^1$-$Q_3$ Bogner-Fox-Schmit element was extended to three families of $C^1$-$Q_k$ ($k\ge 3$)
  finite elements on rectangular meshes in \cite{Zhang-C1Q}.
The $C^1$-$P_3$ Fraeijs de Veubeke-Sander element is extended to two families
  of $C^1$-$P_k$ ($k\ge 3$) finite elements in \cite{Zhang-F}. 

In this work, we extend the $C^1$-$Q_3$ Bogner-Fox-Schmit element to $C^1$-$P_k$ ($k\ge 3$) serendipity
  finite elements.
That is, we enrich the $P_k$ polynomial by a minimum number of $Q_k$ bubble functions to construct
   $C^1$ finite elements on rectangular meshes.

On 2D rectangular meshes, the $C^0$-$P_k$ serendipity finite element is defined by a two-$Q_k$-bubble
   enrichment on each rectangle $T$:
\a{ S_k(T) = P_k(T) + \t{span} \{ x^k y, x y^k \}, \quad k\ge 1. }
cf. \cite{Arnold}.  For the lowest degree case $k=1$, $S_1(T)$ is $Q_1(T)$, 
   the set of bilinear polynomials.
The construction of 3D rectangular serendipity finite elements is completed by
    Arnold and Awanou, in \cite{Arnold}.
     
For the $C^1$-$Q_3$ BFS finite element, all degrees of freedom are on the boundary of a rectangle.
Thus, the $C^1$-$Q_3$ serendipity finite element is the $C^1$-$Q_3$ BFS finite element itself.

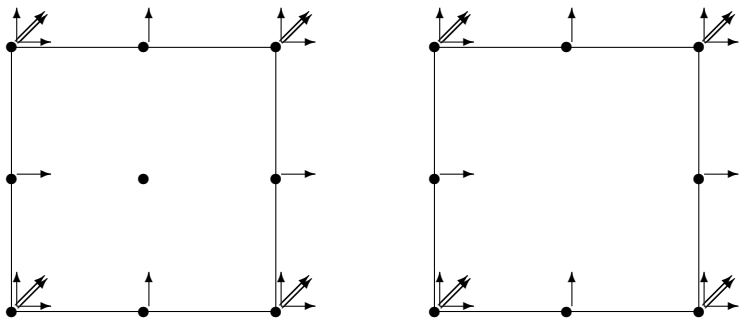
\begin{figure}[H]\centering
 \begin{picture}(300,140)(0,-10) 
 \def\c{\circle*{4}}\def\h{\vector(1,0){13}}\def\v{\vector(0,1){13}}
 \def\d{\multiput(0.5,-0.5)(-1,1){2}{\vector(1,1){11}}}  

 \put(0,0){\begin{picture}(100,108)(0,0)   
  \put(0,0){\line(1,0){100}}  \put(0,0){\line(0,1){100}}   
  \put(100,100){\line(-1,0){100}}  \put(100,100){\line(0,-1){100}}  
  \multiput(0,0)(50,0){3}{\multiput(0,0)(0,50){3}{\c}}  
  \multiput(2,2)(50,0){3}{\v}  \multiput(2,102)(50,0){3}{\v}  
  \multiput(2,2)(0,50){3}{\h}  \multiput(102,2)(0,50){3}{\h} 
    \multiput(2,2)(0,100){2}{\d} \multiput(102,2)(0,100){2}{\d} 
  
 \end{picture} }
  
 \put(160,0){\begin{picture}(100,108)(0,0)   
  \put(0,0){\line(1,0){100}}  \put(0,0){\line(0,1){100}}   
  \put(100,100){\line(-1,0){100}}  \put(100,100){\line(0,-1){100}}  
  \multiput(0,0)(100,0){2}{\multiput(0,0)(0,50){3}{\c}} 
     \multiput(50,0)(100,0){1}{\multiput(0,0)(0,100){2}{\c}}  
  \multiput(2,2)(50,0){3}{\v}  \multiput(2,102)(50,0){3}{\v}  
  \multiput(2,2)(0,50){3}{\h}  \multiput(102,2)(0,50){3}{\h} 
    \multiput(2,2)(0,100){2}{\d} \multiput(102,2)(0,100){2}{\d} 
   
 \end{picture} }
  
 \end{picture}
 \caption{Left: The 25 degrees of freedom 
     for the $C^1$-$Q_4$ BFS element in \eqref{d-Q-k}; \ 
     Right: The 24 degrees of freedom for the $C^1$-$P_4$
       serendipity finite element in \eqref{d-4}. } \label{T-3}
 \end{figure}

To define the $C^1$-$Q_4$ (also referred as $C^1$-$P_4$) serendipity finite element,
  we eliminate the only one internal degree of freedom from the set of
  25 degrees of freedom of the $C^1$-$Q_4$ BFS finite element, shown in Figure \ref{T-3}.
 Though reducing only 1/25 unknowns locally, we have about a 1/10 reduction in the number
   of global unknowns.

Next, to define the $C^1$-$Q_5$ serendipity finite element, 
  we remove all 4 internal degrees of freedom in the set of 36 dofs of the $C^1$-$Q_5$ element.
The local and global ratios of the reduction are about 1/9 and 1/4, respectively.

For the $C^1$-$Q_6$ and $Q_7$ serendipity elements, we eliminate internal $3^2=9$ and $4^2=16$
  dofs from the original $7^2=49$ and $8^2=64$ dofs, respectively.
The global reduction is close the maximal rate of one half.

For $k\ge 8$,  we cannot remove all internal $(k-3)^2$ degrees of freedom in
  the $C^1$-$Q_k$ finite element.
 This is understandable as 8 lines of information 
     (from $C^1$ dofs on the 4 edges of a rectangle) 
   is not enough to determine a $P_8$ polynomial.
Thus we keep the internal $P_{k-8}(T)$ Lagrange nodes of dofs, for $C^1$-$Q_k$ ($k\ge 8$)
    serendipity elements.

As discussed above,  in this work,  we construct a family of $C^1$-$Q_k$ ($k\ge 4$)
    serendipity elements.
To ensure (1) $C^1$-continuity, (2) $P_k$-inclusion and (3) $Q_k$-subset,
  we enrich the $P_4$ and $P_5$ spaces by $9$ $Q_4$ and $11$ $Q_5$-bubble functions, respectively.
For all $k\ge 6$, we enrich the $P_k$ spaces exactly by $12$ $Q_k$ bubble functions.
We show the uni-solvence and quasi-optimality of the newly defined $C^1$-$Q_k$ serendipity elements.   
Numerical tests on the new $C^1$-$P_k$, $k=4,5,6,7$ and $8$, serendipity elements are performed 
  and their comparisons with the corresponding $C^1$-$Q_k$ elements are provided,
  confirming the theory.

\section{The $C^1$-$P_4$ serendipity finite element}

Let $\mathcal Q_h=\{ T \}$ be a uniform square mesh on the domain $\Omega$.
The standard $C^1$-$Q_k$ Bogner-Fox-Schmit (BFS) finite element space on $\mathcal Q_h$ is
  defined by
\an{\label{W-h} W_h = \{ u_h\in  H^2_0(\Omega) : u_h|_T \in Q_k(T) \ \forall T\in \mathcal T_h \}, }
where $Q_k(T)$ is the set of polynomials of separated degree $k$ or less.
 
We define the degrees of freedom of the  $C^1$-$Q_k$ BFS element, $k\ge 3$, cf. Figure \ref{T-3},
     by $F_m(p)=$
\an{\label{d-Q-k} &\begin{cases} p(\b x_i), \partial_x p(\b x_i), \partial_y p(\b x_i),
        \partial_{xy} p(\b x_i),  & i=1,2,3,4, \\
    p(\frac { j\b x_1+j'\b x_{2}}{k-2}),\;
    \partial_{y} p(\frac { j\b x_1+j'\b x_{2}}{k-2}), & j=1,\dots, k-3, \\ 
    p(\frac { j\b x_2+j'\b x_{3}}{k-2}), \;
    \partial_{x} p(\frac { j\b x_2+j'\b x_{3}}{k-2}), & j=1,\dots, k-3, \\  
     p(\frac { j\b x_4+j'\b x_{3}}{k-2}), \;
    \partial_{y} p(\frac { j\b x_4+j'\b x_{3}}{k-2}), & j=1,\dots, k-3, \\ 
    p(\frac { j\b x_1+j'\b x_{4}}{k-2}), \;
    \partial_{x} p(\frac { j\b x_1+j'\b x_{4}}{k-2}), & j=1,\dots, k-3, \\ 
     p(\frac { ( j\b x_1+j'\b x_{4})\ell+( j\b x_2+j'\b x_{3})\ell'}{(k-2)^2}), 
            & j,\ell =1,\dots, k-3, 
    \end{cases} }
where $j'=k-2-j$, $\ell'=k-2-\ell$, and $\b x_i$ are the four vertices of $T$ as shown in 
   Figure \ref{T}.

\begin{lemma} The degrees of freedom \eqref{d-Q-k} uniquely determine the 
   $Q_k(T)$ functions in \eqref{W-h}.
\end{lemma}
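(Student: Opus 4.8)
The plan is the standard two–step unisolvence argument: match counts, then prove injectivity. First I would verify that $\dim Q_k(T)=(k+1)^2$ equals the number of functionals in \eqref{d-Q-k}: there are $16$ from the four vertices ($p,\partial_x p,\partial_y p,\partial_{xy}p$ at each), $8(k-3)$ from the four edges ($2$ functionals at each of the $k-3$ interior edge nodes), and $(k-3)^2$ from the interior nodes, and indeed $16+8(k-3)+(k-3)^2=(k+1)^2$. It then suffices to show that the only $p\in Q_k(T)$ with $F_m(p)=0$ for all $m$ is $p\equiv 0$.

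For injectivity I would write $T=[a_1,b_1]\times[a_2,b_2]$ and argue edge by edge. On the bottom edge $y=a_2$ the restriction $p(\cdot,a_2)$ is a univariate polynomial of degree $\le k$; the vertex functionals at $\b x_1,\b x_2$ make its value and its $x$-derivative vanish at the two endpoints, and the edge functionals make it vanish at the $k-3$ interior nodes — altogether $k+1$ conditions, forcing $p(\cdot,a_2)\equiv 0$. Replacing $p$ by $\partial_y p$ and using $\partial_y p,\partial_{xy}p$ at the vertices in place of $p,\partial_x p$ gives, by the identical count, $(\partial_y p)(\cdot,a_2)\equiv 0$. Hence $p$ and $\partial_y p$ vanish on $y=a_2$, i.e. $(y-a_2)^2\mid p$; running the same argument on the three remaining edges (swapping the roles of $x$ and $y$ on the two vertical edges) yields $(y-b_2)^2\mid p$, $(x-a_1)^2\mid p$ and $(x-b_1)^2\mid p$. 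These four quadratics being pairwise coprime, $p=b\,q$ with $b(x,y)=(x-a_1)^2(x-b_1)^2(y-a_2)^2(y-b_2)^2$. For $k=3$ this already forces $p\equiv 0$, since a nonzero $p=bq$ would have separated degree $\ge 4>3$; for $k\ge 4$ one has $q\in Q_{k-4}(T)$.

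It remains to kill $q$ using the $(k-3)^2$ interior functionals, and the only place a short computation is needed is to recognize the interior nodes as a Cartesian grid: inserting the vertex coordinates into the formula for the interior nodes in \eqref{d-Q-k} gives
\[
\frac{(j\b x_1+j'\b x_4)\ell+(j\b x_2+j'\b x_3)\ell'}{(k-2)^2}
=\Bigl(\tfrac{\ell a_1+\ell' b_1}{k-2},\ \tfrac{j a_2+j' b_2}{k-2}\Bigr),
\qquad j,\ell=1,\dots,k-3,
\]
a tensor-product grid of $(k-3)^2$ distinct points lying strictly inside $T$, where $b$ is nonzero. Hence the interior functionals force $q$ to vanish on a $(k-3)\times(k-3)$ Cartesian grid, and since $q$ has separated degree $\le k-4<k-3$, tensor-product Lagrange unisolvence gives $q\equiv 0$, and therefore $p\equiv 0$.

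I do not expect a genuine obstacle here; the points requiring care are the bookkeeping — verifying that each edge trace and each first–normal–derivative trace receives exactly $k+1$ of the listed functionals, with $\partial_{xy}p$ at the vertices supplying the endpoint derivative data for the normal–derivative trace — and the coordinate identification that places the interior nodes on a rectangular grid disjoint from the zero set of $b$. Everything else is the familiar "vanishing on the boundary implies divisibility by the boundary bubble" reduction.
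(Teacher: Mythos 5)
Your proof is correct, and it diverges from the paper's argument in how the interior degrees of freedom are used. The paper also starts by matching the counts and by peeling off the squared factors coming from the horizontal edges, but it then stays one-dimensional: it sweeps the $k-3$ interior horizontal lines $y=y_{14,j}$ one at a time, on each line collecting the $k-3$ interior values together with the value and $\partial_x$ data at the two endpoints (which sit on the vertical edges) to annihilate the degree-$k$ trace in $x$, and factors out one linear factor $(y-y_{14,j})$ per line; since $2+2+(k-3)=k+1$ exceeds the $y$-degree, $p\equiv 0$, with the $k=3$ case disposed of after the first two edges exactly as you do. You instead treat all four edges symmetrically, divide by the full boundary bubble $b=(x-a_1)^2(x-b_1)^2(y-a_2)^2(y-b_2)^2$, identify the interior nodes as a $(k-3)\times(k-3)$ Cartesian grid, and finish with tensor-product Lagrange unisolvence for $q\in Q_{k-4}$. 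Both routes are sound; yours buys symmetry in $x$ and $y$ and makes explicit the tensor-grid structure of the interior nodes (a point the paper leaves implicit, and where its displayed evaluations appear to carry typos, being written at points of the edge $\b x_1\b x_2$ rather than on the interior lines), at the cost of invoking a two-dimensional unisolvence fact; the paper's line-sweep needs only univariate zero counting and is the template it reuses for the serendipity elements in the later sections. Your bookkeeping of the $k+1$ conditions per trace and the coprimality/divisibility step are all in order.
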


\begin{proof}We count the dimension of $Q_k(T)$ and the number $N_{\text{dof}}$ of
   degrees of freedom in \eqref{d-Q-k},
\a{ \dim Q_k(T) &= (k+1)^2 =k^2+2k+1, \\
    N_{\text{dof}} &= 16+8(k-3)+(k-3)^2 =k^2+2k+1. }
Thus the uni-solvency is determined by uniqueness.

Let $p_k\in Q_k(T)$ and $F_m(p_k)=0$ for all degrees of freedom in \eqref{d-Q-k}.
Evaluating the $(k+1)$ degrees of freedom,
   the function values and the two $\partial_x$ derivatives at the two end points on $\b x_1\b x_2$,
  we get $p_k|_{\b x_1\b x_2}=0$ and
\a{ p_k=\frac{y-y_1}h p_{k,k-1}\quad\t{for some } \ p_{k,k-1}\in Q_{k,k-1}(T),  }
where $h=y_4-y_1$, $(x_1,y_1)=\b x_1$ and $Q_{k,k-1}$ is the space of separated degrees $k$ and $k-1$
  in $x$ and $y$ respectively.
By the $(k-1)$ $\partial_y p_k$ and 2 $\partial_{xy} p_k$ dofs at $\b x_1\b x_2$, we get 
  $p_{k,k-1}|_{\b x_1\b x_2}=0$ and
\a{ p_k=\frac{(y-y_1)^2}{h^2} p_{k,k-2}\quad\t{for some } \ p_{k,k-2}\in Q_{k,k-2}(T).  }
 Repeating the argument on $\b x_4\b x_3$,  we get
\a{ p_k=\frac{(y-y_1)^2}{h^2}\frac{(y_4-y)^2}{h^2} p_{k,k-4}\quad\t{for some } \ p_{k,k-4}\in Q_{k,k-4}(T).  }
If $k=3$, the proof is done as $p_k=0$.

Evaluating the degrees of freedom at the line $y=y_{14,1}:= (y_1+(k-3)y_4)/(k-2)$,  we get
\a{ p_k(\frac { j\b x_1+j'\b x_{2}}{k-2})&=\frac{1}{(k-2)^2}\cdot \frac{(k-3)^2}{(k-2)^2}\cdot
              p_k(\frac { j\b x_1+j'\b x_{2}}{k-2})\\ &=0, \qquad\qquad j=0, \dots, k-2, \\ \\
    \partial_x p_k(\frac { j\b x_1+j'\b x_{2}}{k-2})&=\frac{1}{(k-2)^2}\cdot \frac{(k-3)^2}{(k-2)^2}\cdot
              \partial_x p_k(\frac { j\b x_1+j'\b x_{2}}{k-2})\\ & =0, \qquad\qquad j=0,  k-2, }
and $p_{k,k-4}|_{y=y_{14,1}}=0$.  Thus, 
we have
\a{ p_k=\frac{(y-y_1)^2}{h^2}\frac{(y_4-y)^2}{h^2}(y-y_{14,1}) p_{k,k-5} }
for some $p_{k,k-5}\in Q_{k,k-4}(T)$.   Repeating the evaluation on each line, we get 
\a{ p_k=\frac{(y-y_1)^2}{h^2}\frac{(y_4-y)^2}{h^2}\prod_{j=1}^{k-3}(y-y_{14,j}) p_{k,-1} }
for some $p_{k,-1}\in Q_{k,-1}(T)$.  Thus, $p_k=0$ and the lemma is proved.             
\end{proof}

Let $\{ b_i \}$ be the dual basis of $W_h$ on $T$, to the degrees of freedom in \eqref{d-Q-k}.
For $k=4$, we select 9 bubble basis functions 
  $\{b_5, b_6, b_7, b_8, b_{12}, b_{14}, b_{17}, b_{18}$, $b_{20}\}$
    as shown in Figure \ref{T}.
Enriched by the nine bubble functions,  we define the $C^1$-$P_4$ serendipity element by
\an{\label{V-4} V_4(T)=\t{span} \{ P_4(T), \ b_j, \ j=5, 6, 7, 8, {12}, {14}, {17}, {18}, 20\}.  }
We define the following degrees of freedom for the space $V_4(T)$, ensuring the global 
   $C^1$ continuity late, \  by $F_m(p)=$
\an{\label{d-4} &\begin{cases} p(\b x_i), \partial_x p(\b x_i), \partial_y p(\b x_i),
        \partial_{x y} p(\b x_i), \quad \ i=1,2,3,4, \\
    p(\frac { \b x_1+\b x_{2}}{2}),\;
    \partial_{y} p(\frac { \b x_1+\b x_{2}}{2}), \ 
    p(\frac { \b x_2+\b x_{3}}{2}), \;
    \partial_{x} p(\frac { \b x_2+\b x_{3}}{2}), &  \\  
     p(\frac { \b x_4+\b x_{3}}{2}), \;
    \partial_{y} p(\frac { \b x_4+\b x_{3}}{2}), \
    p(\frac { \b x_1+\b x_{4}}{2}), \;
    \partial_{x} p(\frac {\b x_1+\b x_{4}}{2}).
    \end{cases} }

\begin{lemma} The degrees of freedom \eqref{d-4} uniquely determine the 
   $V_4(T)$ functions in \eqref{V-4}.
\end{lemma}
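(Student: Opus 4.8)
\medskip\noindent\emph{Proof proposal.}
The plan is to mirror the proof of the preceding lemma: match dimensions, then settle the homogeneous case. Since $\dim V_4(T)\le \dim P_4(T)+9=24$, which equals the number of functionals in \eqref{d-4}, it suffices to show that any $p\in V_4(T)$ annihilated by all of \eqref{d-4} is identically zero (this also yields $\dim V_4(T)=24$ a posteriori). The key point is that \eqref{d-4} is exactly the list \eqref{d-Q-k} with $k=4$ minus the single interior (centroid) node, while $V_4(T)\subset Q_4(T)$.

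First I would run the edge-peeling of the previous proof. On $\b x_1\b x_2$ the five vanishing values $p(\b x_1),p(\b x_2),p(\frac{\b x_1+\b x_2}{2}),\partial_x p(\b x_1),\partial_x p(\b x_2)$ force $p|_{\b x_1\b x_2}=0$, a degree-$\le4$ univariate polynomial with a double zero at each endpoint and a simple zero between them being zero; a further peeling using $\partial_y p$ at the three edge nodes and $\partial_{xy}p$ at $\b x_1,\b x_2$ gives $p=(y-y_1)^2(\cdots)$, and the same on $\b x_4\b x_3$ peels off $(y_4-y)^2$, leaving $p=(y-y_1)^2(y_4-y)^2u(x)$ with $\deg u\le4$. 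The four remaining functionals, the value and $\partial_x$ at the midpoints of $\b x_2\b x_3$ and $\b x_1\b x_4$, force double zeros of $u$ at $x_1$ and at $x_2$, so
\begin{equation*}
p=c\,(x-x_1)^2(x-x_2)^2(y-y_1)^2(y-y_4)^2 .
\end{equation*}
Because the centroid functional has been removed, the peeling stops here rather than at $p=0$; note that $(x-x_1)^2(x-x_2)^2(y-y_1)^2(y-y_4)^2$ is, up to a nonzero factor, the interior dual basis function $b_{25}$ of the $C^1$-$Q_4$ BFS element (all $24$ functionals of \eqref{d-4} annihilate it, its centroid value does not vanish).

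It remains to prove $c=0$, i.e.\ $b_{25}\notin V_4(T)$. Writing $b_{25}=r+\sum_j c_j b_j$ with $r\in P_4(T)$ and $j$ over the nine indices $\{5,6,7,8,12,14,17,18,20\}$, and applying the fifteen functionals of \eqref{d-Q-k} ($k=4$) whose index lies in $\{1,\dots,24\}$ and is not among those nine (each of which annihilates $b_{25}$ and every $b_j$ in the sum), we get $F_i(r)=0$ for all fifteen. A direct check --- this is exactly where the particular choice of the nine bubbles enters --- shows these fifteen functionals are unisolvent on the $15$-dimensional $P_4(T)$, so $r=0$; then the centroid functional gives $1=\sum_j c_j\cdot 0=0$, a contradiction, hence $c=0$ and $p=0$. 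The same unisolvence shows the sum $P_4(T)+\operatorname{span}\{b_j\}$ is direct, so $\dim V_4(T)=24$.

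I expect the real work to be concentrated in that last step: verifying that the nine selected bubbles together with $P_4(T)$ miss the interior $Q_4$ bubble, equivalently that the fifteen complementary nodal functionals are unisolvent on $P_4(T)$. The rest is a routine repetition of the one-dimensional Hermite-interpolation counting already used for \eqref{d-Q-k}.
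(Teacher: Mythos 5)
Your overall scheme is sound and is organized differently from the paper's: you peel $p\in V_4(T)\subset Q_4(T)$ along the bottom and top edges and then use the four left/right midpoint functionals to reduce $p$ to a multiple of the interior bubble $(x-x_1)^2(x-x_2)^2(y-y_1)^2(y-y_4)^2\propto b_{25}$, and then argue $b_{25}\notin V_4(T)$; the paper instead writes $p=p_4+\sum_{j=1}^9 c_jb_{i_j}$ with $p_4\in P_4(T)$, uses the fifteen functionals of \eqref{d-4} not dual to the nine bubbles to force $p_4=0$, and then kills the $c_j$ sequentially. Your first stage is carried out correctly and completely.

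However, there is a genuine gap at exactly the point you flag as ``a direct check'': the claim that the fifteen functionals of \eqref{d-Q-k} ($k=4$) complementary to the nine chosen bubble indices are unisolvent on $P_4(T)$ (equivalently, that $b_{25}\notin P_4(T)+\operatorname{span}\{b_5,b_6,b_7,b_8,b_{12},b_{14},b_{17},b_{18},b_{20}\}$) is asserted but never verified. This is not a routine counting step --- it is the entire substantive content of the paper's proof, which establishes it by successive factorization: the values, $\partial_y$ and the left-midpoint value on $\b x_1\b x_4$ give $p_4=\lambda_{14}p_3$; the remaining normal-derivative data on that edge give $p_4=\lambda_{14}^2p_2$; the value and $\partial_x$ data at $\b x_3$ and the top-midpoint value give $p_4=\lambda_{14}^2\lambda_{43}p_1$ and then $p_4=\lambda_{14}^2\lambda_{43}^2c$; finally $\partial_y p(\frac{\b x_4+\b x_3}{2})=0$ forces $c=0$. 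Note that here one works with total-degree polynomials, so each factorization drops the total degree by one and the edge restrictions have lower degree than in your $Q_4$ peeling --- the argument is genuinely different from the one-dimensional Hermite counting you reuse in your first stage, and it is also where the particular (asymmetric) choice of the nine bubbles must be checked to work: a different choice of nine dual functions can fail. Until this unisolvence is actually proved, your argument shows only that any annihilated $p$ is a multiple of $b_{25}$, not that it vanishes; your concluding directness claim $\dim V_4(T)=24$ rests on the same unproven step. Supplying that verification (essentially reproducing the paper's peeling of $p_4\in P_4(T)$) would make your proof complete.
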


\begin{proof}We count the dimension of $V_4$ in \eqref{V-4} and the number $N_{\text{dof}}$ of
   degrees of freedom in \eqref{d-4},
\a{ \dim V_4(T) &= \dim P_4 + 11 =15+9=24, \\
    N_{\text{dof}} &= 16+8 =24. }
Thus the uni-solvency is determined by uniqueness.

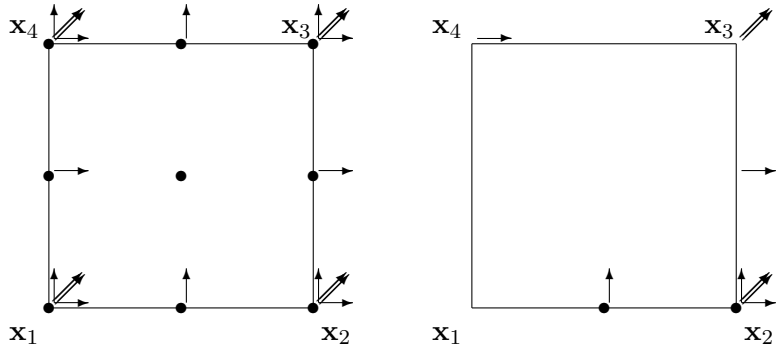
\begin{figure}[H]\centering
 \begin{picture}(280,140)(0,-10) 
 \def\c{\circle*{4}}\def\h{\vector(1,0){13}}\def\v{\vector(0,1){13}}
 \def\d{\multiput(0.5,-0.5)(-1,1){2}{\vector(1,1){11}}}  

 \put(0,0){\begin{picture}(100,108)(0,0)   
  \put(0,0){\line(1,0){100}}  \put(0,0){\line(0,1){100}}   
  \put(100,100){\line(-1,0){100}}  \put(100,100){\line(0,-1){100}}  
  \multiput(0,0)(50,0){3}{\multiput(0,0)(0,50){3}{\c}}  
  \multiput(2,2)(50,0){3}{\v}  \multiput(2,102)(50,0){3}{\v}  
  \multiput(2,2)(0,50){3}{\h}  \multiput(102,2)(0,50){3}{\h} 
    \multiput(2,2)(0,100){2}{\d} \multiput(102,2)(0,100){2}{\d} 
  
      \put(-15,-12){$\b x_1$} \put(103,-12){$\b x_2$}
      \put(-15,104){$\b x_4$} \put(88,104){$\b x_3$}
 \end{picture} }
 
 \put(160,0){\begin{picture}(100,108)(0,0)   
  \put(0,0){\line(1,0){100}}  \put(0,0){\line(0,1){100}}   
  \put(100,100){\line(-1,0){100}}  \put(100,100){\line(0,-1){100}}  \put(102,2){\v} 
   \put(102,102){\d}  \put(102,2){\h} \put(102,2){\d} 
  \put(2,102){\h}  \put(52,2){\v}  \put(50,0){\c} \put(100,0){\c} \put(102,52){\h} 
      \put(-15,-12){$\b x_1$} \put(103,-12){$\b x_2$}
      \put(-15,104){$\b x_4$} \put(88,104){$\b x_3$}
 \end{picture} }

 \end{picture}
 \caption{The 25 degrees of freedom 
     for the $C^1$-$Q_4$ BFS element in \eqref{d-Q-k}, 
     and the 9 bubble functions $\{b_5, b_6, b_7, b_8, b_{12}, b_{14}, b_{17}, b_{18}, b_{20}\}$
      used to define $C^1$-$P_4$ serendipity element in \eqref{V-4}. } \label{T}
 \end{figure}

Let $p\in V_4(T)$ in \eqref{V-4} and $F_m(p)=0$ for all degrees of freedom in \eqref{d-4}.
Let \an{\label{p-4} p=p_4+\sum_{j=1}^9 c_j b_{i_j} \quad \ \t{for some } \ p_4\in P_4(T).  }
As all $b_i$ vanish at these points, we have
\an{\label{p-4-1}\ad{
   && p_4(\b x_1)&=0, \ & \partial_y p_4(\b x_1)&=0, \ & p_4(\frac{\b x_1+\b x_4}2)&=0, & \\
    &&  p_4(\b x_4)& =0, \ & \partial_y  p_4(\b x_4)&=0,  } }
and consequently $p_4|_{\b x_1\b x_4}=0$ as the degree 4 polynomial has 5 zero points.
Thus \a{ p_4=\lambda_{14} p_3 \quad \ \t{for some } \ p_3\in P_3(T), }
where $\lambda_{14}$ is a linear polynomial vanishing at the line $\b x_1\b x_4$
  and assuming value $1$ at $\b x_2$.
  
Now, as all $b_i$ have these vanishing degrees of freedom,  we have
\a{ \partial_x p_4(\b x_1)&= h p_3(\b x_1)=0, \\
    \partial_{x y} p_4(\b x_1)&= h \partial_y p_3(\b x_1)=0, \\
    \partial_x p_4(\frac{\b x_4+\b x_1}2)&= h  p_3(\frac{\b x_4+\b x_1}2)=0,  \\ 
    \partial_{x y} p_4(\b x_4)&= h \partial_y p_3(\b x_4)=0,  } 
     and consequently $p_3|_{\b x_1\b x_4}=0$.
We can then factor out another linear polynomial that
\an{\label{p-4-2} p_4= \lambda_{14}^2 p_2 \quad \ \t{for some } \ p_2\in P_2(T). }
As $b_i$ have these three degrees of freedom vanished,  
  we then have 
  \a{ p_4(\frac{\b x_4+\b x_3}2)&= \frac 1{2^2} \cdot p_2(\frac{\b x_4+\b x_3}2)=0, \\
    p_4( \b x_3) &=1 \cdot p_2( \b x_3)=0, \\
    \partial_x p_4( \b x_3) &= \frac 1{h^2} \cdot p_2( \b x_3)+1 \cdot \partial_x  p_2( \b x_3)=0, }
and consequently $p_2|_{\b x_4\b x_3}=0$.
We factor out this linear polynomial factor as
\a{ p_4= \lambda_{14}^2 \lambda_{43} p_1  \quad \ \t{for some } \ p_1\in P_1(T), }
where $\lambda_{43}$ is a linear polynomial vanishing at the line $\b x_4\b x_3$
  and assuming value $1$ at $\b x_1$.

As $b_i$ again have the following two degrees of freedom vanished,  
  we then have
\a{ p_1(\frac{\b x_4+\b x_3}2)=0, \ \partial_x p_2( \b x_3)=0,   }
and consequently $p_1|_{\b x_3\b x_4}=0$.
We factor out this last linear polynomial factor as
\a{ p_4= \lambda_{14}^2 \lambda_{43}^2 c \quad \ \t{for some } \ c\in P_0(T), }
where $\lambda_{43}$ is a linear polynomial vanishing at the line $\b x_4\b x_3$
  and assuming value $1$ at $\b x_1$.
Evaluating the last degree of freedom $\partial_y p(\frac{\b x_4+\b x_3}2)=0$, 
  we have
\a{ \partial_y p_4(\frac{\b x_4+\b x_3}2)=\frac 1{2^2}\cdot \frac 1 2 \cdot \frac {-1} h c =0, }
where $h$ is the size of square $T$.  Thus $c=0$ and $p_4=0$ in \eqref{p-4}.

As $p_4=0$, evaluating $p$ in \eqref{p-4} sequentially at the degrees of freedom of $b_{i_j}$,
  it follows that
  \a{ c_1=\dots=c_9 = 0.  }
The lemma is proved as $p=0$ in \eqref{p-4}.
\end{proof}

\section{The $C^1$-$P_5$ serendipity finite element}

Enriched by the eleven bubble functions,  we define the $C^1$-$P_5$ serendipity element by
\an{\label{V-5} V_5(T)=\t{span} \{ P_5(T), \ b_j, \ j=5, 6, 7, 8, {12}, {14}, 18, 19,20,21,22\},  }
where $b_{i_j}$ is a basis function in \eqref{W-h}, dual to the degrees of freedom in \eqref{d-Q-k}.
We define the following degrees of freedom for the space $V_5(T)$, ensuring the global 
   $C^1$ continuity late, \  by $F_m(p)=$
\an{\label{d-5} &\begin{cases} p(\b x_i), \partial_x p(\b x_i), \partial_y p(\b x_i),
        \partial_{x y} p(\b x_i),   \ & i=1,2,3,4, \\
    p(\frac { j\b x_1+j'\b x_{2}}{k-2}),\;
    \partial_{y} p(\frac { j\b x_1+j'\b x_{2}}{k-2}), & j=1,\dots, k-3, \\ 
    p(\frac { j\b x_2+j'\b x_{3}}{k-2}), \;
    \partial_{x} p(\frac { j\b x_2+j'\b x_{3}}{k-2}), & j=1,\dots, k-3, \\  
     p(\frac { j\b x_4+j'\b x_{3}}{k-2}), \;
    \partial_{y} p(\frac { j\b x_4+j'\b x_{3}}{k-2}), & j=1,\dots, k-3, \\ 
    p(\frac { j\b x_1+j'\b x_{4}}{k-2}), \;
    \partial_{x} p(\frac { j\b x_1+j'\b x_{4}}{k-2}), & j=1,\dots, k-3,
    \end{cases} }
where $k=5$, and $j'=2-j'$.

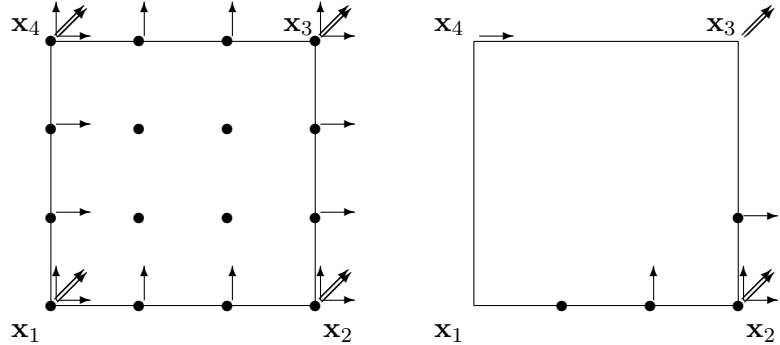
\begin{figure}[H]\centering
 \begin{picture}(280,140)(0,-10) 
 \def\c{\circle*{4}}\def\h{\vector(1,0){13}}\def\v{\vector(0,1){13}}
 \def\d{\multiput(0.5,-0.5)(-1,1){2}{\vector(1,1){11}}}  

 \put(0,0){\begin{picture}(100,108)(0,0)   
  \put(0,0){\line(1,0){100}}  \put(0,0){\line(0,1){100}}   
  \put(100,100){\line(-1,0){100}}  \put(100,100){\line(0,-1){100}}  
  \multiput(0,0)(33.33,0){4}{\multiput(0,0)(0,33.33){4}{\c}}  
  \multiput(2,2)(33.33,0){4}{\v}  \multiput(2,102)(33.33,0){4}{\v}  
  \multiput(2,2)(0,33.33){4}{\h}  \multiput(102,2)(0,33.33){4}{\h} 
    \multiput(2,2)(0,100){2}{\d} \multiput(102,2)(0,100){2}{\d} 
  
      \put(-15,-12){$\b x_1$} \put(103,-12){$\b x_2$}
      \put(-15,104){$\b x_4$} \put(88,104){$\b x_3$}
 \end{picture} }
 
 \put(160,0){\begin{picture}(100,108)(0,0)   
  \put(0,0){\line(1,0){100}}  \put(0,0){\line(0,1){100}}   
  \put(100,100){\line(-1,0){100}}  \put(100,100){\line(0,-1){100}}  \put(102,2){\v} 
   \put(102,102){\d}  \put(102,2){\h} \put(102,2){\d} 
  \put(2,102){\h}  \put(68,2){\v}  \put(33.33,0){\c} \put(66.66,0){\c} \put(100,0){\c}
  
     \put(102,34){\h} \put(100,33.33){\c} 
      \put(-15,-12){$\b x_1$} \put(103,-12){$\b x_2$}
      \put(-15,104){$\b x_4$} \put(88,104){$\b x_3$}
 \end{picture} }

 \end{picture}
 \caption{The $6\times 6$ degrees of freedom 
     for the $C^1$-$Q_5$ BFS element in \eqref{d-Q-k},
     and the 11 bubble functions $\{b_5, b_6, b_7, b_8, b_{12}, b_{14}, b_{18}, b_{19}, b_{20},b_{21},
        b_{22} \}$
      used to define $C^1$-$P_5$ serendipity element in \eqref{V-5}. } \label{T5}
 \end{figure}

\begin{lemma} The degrees of freedom \eqref{d-5} uniquely determine the 
   $V_5(T)$ functions in \eqref{V-5}.
\end{lemma}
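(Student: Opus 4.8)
The plan is to follow, step for step, the proof of the $C^1$-$P_4$ serendipity lemma. First I would record the dimension count
$$\dim V_5(T)=\dim P_5(T)+11=21+11=32,\qquad N_{\text{dof}}=16+16=32,$$
(four dofs at each of the four vertices, and two dofs at each of the $k-3=2$ interior nodes of each of the four edges, with $k=5$), so that uni-solvency follows once I show a $p\in V_5(T)$ annihilated by all functionals of \eqref{d-5} is $0$; this simultaneously shows the eleven $b_{i_j}$ are independent over $P_5(T)$, i.e. $\dim V_5(T)=32$. Next I would write $p=p_5+\sum_{j=1}^{11}c_j b_{i_j}$ with $p_5\in P_5(T)$ and $\{i_1,\dots,i_{11}\}=\{5,6,7,8,12,14,18,19,20,21,22\}$. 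The decisive observation is that \eqref{d-5} is precisely the list of the $32$ non-interior degrees of freedom of the BFS element \eqref{d-Q-k}, and that each $b_{i_j}$ is the \eqref{d-Q-k}-dual basis function of $F_{i_j}$, so $F_m(b_{i_j})=\delta_{m i_j}$ for every $m$ labelling a functional of \eqref{d-5}. Hence $F_m(p_5)=F_m(p)=0$ for each of the $21$ ``pure'' functionals, those $F_m$ in \eqref{d-5} with $m\notin\{i_1,\dots,i_{11}\}$; reading off the index set, these are: the four dofs $p,\partial_x p,\partial_y p,\partial_{xy}p$ at $\b x_1$; $p,\partial_x p,\partial_y p$ at $\b x_3$; $p,\partial_y p,\partial_{xy}p$ at $\b x_4$; $p$ and $\partial_x p$ at both interior nodes of the edge $\b x_1\b x_4$; $p$ and $\partial_y p$ at both interior nodes of $\b x_4\b x_3$; $p$ at the interior node of $\b x_1\b x_2$ nearer $\b x_2$; and $p,\partial_x p$ at the interior node of $\b x_2\b x_3$ nearer $\b x_3$.

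The heart of the argument is to show these $21$ functionals are uni-solvent on $P_5(T)$, by repeated factorization as in the $P_4$ case. Normalize $T=[0,h]^2$ with $\b x_1$ at the origin. On $x=0$, the degree-$\le 5$ polynomial $p_5(0,\cdot)$ has a double zero at $y=0$ and at $y=h$ (from the dofs at $\b x_1$ and $\b x_4$) and simple zeros at $y=h/3,2h/3$ (the interior nodes of $\b x_1\b x_4$), so $p_5(0,\cdot)\equiv 0$ and $p_5=x\,q_4$ with $q_4\in P_4(T)$. Then $q_4(0,\cdot)=\partial_x p_5(0,\cdot)$ has degree $\le 4$ and satisfies $q_4(0,0)=\partial_y q_4(0,0)=q_4(0,h/3)=q_4(0,2h/3)=\partial_y q_4(0,h)=0$ (from $\partial_x p_5,\partial_{xy}p_5$ at $\b x_1$, the two $\partial_x p_5$ dofs on $\b x_1\b x_4$, and $\partial_{xy}p_5$ at $\b x_4$ — note that $\partial_x p_5(\b x_4)$ is \emph{not} needed, which is exactly why that BFS dof could be spent on a bubble), hence $q_4(0,\cdot)\equiv 0$ and $p_5=x^2 q_3$ with $q_3\in P_3(T)$. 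Now on $y=h$: the three $\partial_y p_5$ dofs there (both interior nodes of $\b x_4\b x_3$ and $\b x_3$) force $\partial_y q_3(\cdot,h)\equiv 0$, so $q_3=q_3(\cdot,h)+(y-h)^2 R$ with $\deg R\le 1$; the three $p_5$ dofs there force $q_3(x,h)=c\,(x-\tfrac h3)(x-\tfrac{2h}3)(x-h)$. Exactly four pure functionals remain — $p_5$ at $(\tfrac{2h}3,0)$, $p_5$ and $\partial_x p_5$ at $(h,\tfrac h3)$, and $\partial_x p_5$ at $\b x_3$ — and after dividing out the known factors these become four linear equations for $c$ and the three coefficients of $R$; evaluating $(x-\tfrac h3)(x-\tfrac{2h}3)(x-h)$ and its $x$-derivative at $x=\tfrac{2h}3$ and $x=h$ shows the $4\times 4$ coefficient matrix is nonsingular, so $q_3\equiv 0$ and $p_5=0$. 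Finally $p=\sum_j c_j b_{i_j}$, and evaluating the $11$ bubble dofs gives $c_j=F_{i_j}(p)=0$, hence $p=0$.

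I expect the main obstacle to be that last sub-step: unlike the $P_4$ case, once $p_5=x^2 q_3$ is reached the remaining data no longer vanish along a full line, so one cannot keep peeling off linear factors and must instead expand $q_3$ about $y=h$ and check the explicit $4\times 4$ system is invertible — it is precisely here that the particular choice of the eleven bubble functions is genuinely exploited, and a wrong choice would make this matrix singular. A lesser, purely clerical, nuisance is keeping straight which BFS degrees of freedom the index set $\{5,6,7,8,12,14,18,19,20,21,22\}$ corresponds to geometrically, which is what produces the list of $21$ pure functionals used throughout.
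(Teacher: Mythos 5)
Your proposal is correct, and it shares the paper's skeleton — the count $\dim V_5(T)=21+11=32=N_{\text{dof}}$, the decomposition $p=p_5+\sum_j c_j b_{i_j}$, the duality observation that every functional of \eqref{d-5} whose index is not a bubble index annihilates $p_5$, and the extraction of the factor $\lambda_{14}^2$ (your $x^2$) from the data on the edge $\mathbf{x}_1\mathbf{x}_4$ — but your endgame is genuinely different from the paper's. The paper keeps peeling off linear factors: it uses $p,\partial_x p$ at $\mathbf{x}_3$ and the top-edge values to factor $\lambda_{43}$, the top-edge normal derivatives plus $\partial_y p(\mathbf{x}_3)$ to factor $\lambda_{43}^2$, then the function values at \emph{both} interior nodes of $\mathbf{x}_2\mathbf{x}_3$ to factor $\lambda_{23}$, and one right-edge $\partial_x$ to kill the remaining constant; note that this consumes three right-edge interior dofs, one more than the two left pure after two right-edge bubbles are spent, so the paper's computation tacitly presumes a bubble selection slightly different from its own index list/figure. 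Your route avoids that tension: you stop factoring at $p_5=x^2q_3$, expand $q_3$ about $y=h$, and close with a $4\times4$ system that uses only two right-edge dofs, $\partial_x p$ at $\mathbf{x}_3$, and one bottom-edge value. That system is indeed nonsingular: writing $q_3=c\,\phi(x)+(y-h)^2(\alpha+\beta x+\gamma y)$ with $\phi(x)=(x-\tfrac h3)(x-\tfrac{2h}3)(x-h)$, the equation from $\partial_x p_5(\mathbf{x}_3)=0$ reads $h^2c\,\phi'(h)=0$ (the factor $(y-h)^2$ kills the $R$-part and $q_3(h,h)=0$), and $\phi'(h)=\tfrac{2h^2}9\neq0$ gives $c=0$, after which the remaining three equations triangularize to $\beta=\alpha=\gamma=0$; this works whether the retained right-edge node is $(h,\tfrac h3)$ or $(h,\tfrac{2h}3)$, which also absorbs the small slip in your write-up (your list of $21$ pure functionals places the node nearer $\mathbf{x}_3$, your final computation evaluates at the one nearer $\mathbf{x}_2$). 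The Hermite-type fact you assert in passing is also fine: $q(0)=q'(0)=q(\tfrac h3)=q(\tfrac{2h}3)=0$ forces $q=c\,y^2(y-\tfrac h3)(y-\tfrac{2h}3)$ and then $q'(h)=\tfrac{13}9ch^3$, so $q'(h)=0$ gives $q\equiv0$. A further merit of your organization is that isolating the statement ``the $21$ pure functionals are unisolvent on $P_5(T)$'' yields in one stroke both the directness of the sum $P_5(T)\oplus\operatorname{span}\{b_{i_j}\}$ (so $\dim V_5(T)=32$ genuinely holds) and the uniqueness for $V_5(T)$, a point the paper passes over silently.
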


\begin{proof} We count the dimension of $V_5$ in \eqref{V-5} and the number $N_{\text{dof}}$ of
   degrees of freedom in \eqref{d-5},
\a{ \dim V_5(T) &= \dim P_5 + 11 =21+11=32, \\
    N_{\text{dof}} &= 16+8\cdot 2=32. }
Thus the uni-solvency is determined by uniqueness.

Let $p\in V_5(T)$ in \eqref{V-5} and $F_m(p)=0$ for all degrees of freedom in \eqref{d-5}.
Let \an{\label{p-5} p=p_5+\sum_{j=1}^{11} c_j b_{i_j} \quad \ \t{for some } \ p_5\in P_5(T).  }
Repeating \eqref{p-4-1} and \eqref{p-4-2},  we have 
\a{ p_5= \lambda_{14}^2 p_3 \quad \ \t{for some } \ p_3\in P_3(T). }
As $b_i$ have these four degrees of freedom vanished,  
  we then have
\a{ && p_3(\frac{2\b x_4+\b x_3}3)&=0, \ & p_2( \b x_3)&=0, && \\
     &&p_3(\frac{\b x_4+2\b x_3}3)&=0, \ & \partial_x  p_2( \b x_3)&=0, }
and consequently $p_3|_{\b x_4\b x_3}=0$.
We factor out this linear polynomial factor as
\a{ p_5= \lambda_{14}^2 \lambda_{43} p_2  \quad \ \t{for some } \ p_2\in P_2(T). }
Evaluating the normal derivative, we have
\a{ \partial_y p_5(\frac{2\b x_4+\b x_3}3)&= \frac 1{3^2} \cdot 
   \frac{-1} h p_2(\frac{2\b x_4+\b x_3}3)=0,\\
 \partial_y p_5(\frac{\b x_4+2\b x_3}3)&= \frac {2^2}{3^2} \cdot
       \frac{-1} h p_2(\frac{\b x_4+2\b x_3}3)=0,\\ 
    \partial_y p_5( \b x_3 )&= 1 \cdot \frac{-1} h p_2(\b x_3)=0,
  }where $h$ is the $y$-size of $T$.
We factor out this linear polynomial factor as
\a{ p_5= \lambda_{14}^2 \lambda_{43}^2 p_1 \quad \ \t{for some } \ p_1\in P_1(T). }
We evaluate the function values in the middle of edge $\b x_2\b x_3$, cf. Figure \ref{T5},
\a{ p_5(\frac{2\b x_2+\b x_3}3)&= 1^2 \cdot \frac{2^2}{3^2}\cdot p_1(\frac{2\b x_2+\b x_3}3)=0,\\
    p_5(\frac{\b x_2+2\b x_3}3)&= 1^2 \cdot \frac{1^2}{3^2}\cdot p_1(\frac{\b x_2+2\b x_3}3)=0. }
Thus $p_1$ vanishes on the edge and we have
\a{ p_5= \lambda_{14}^2 \lambda_{43}^2 \lambda_{23} p_0 \quad \ \t{for some } \ p_0\in P_0(T). }
Evaluating the last degree of freedom, cf. Figure \ref{T5},
\a{ \partial_x p_5(\frac{\b x_2 +2\b x_3}3)=1 \cdot \frac 1 {3^2} \cdot \frac {1} h p_0 =0, }
where $h$ is the size of square $T$.  Thus $p_0=0$ and $p_5=0$ in \eqref{p-5}.

Evaluating $p$ in \eqref{p-5} sequentially at the degrees of freedom of $b_{i_j}$,
  it follows that
  \a{ c_1=\dots=c_{11} = 0, \quad \t{and }\ p=0.  }
The lemma is proved.
\end{proof}

\section{The $C^1$-$P_k$ ($k\ge 6$) serendipity finite element}

For all $k\ge 6$, we enrich the $P_k$ space by 12 bubbles to define the $C^1$-$P_k$ ($k\ge 6$) 
  serendipity element,
\an{\label{V-k} V_k(T)=\t{span} \{ P_k(T), \ b_5, b_6, b_7, b_8, b_{12}, b_{14}, b_{2_1},
        b_{2_2},  b_{2_3}, b_{2_4}, b_{2_6}, b_{3_2}\},  }
where $b_{i}$ is a basis function in \eqref{W-h} dual to a vertex degree of freedom (first row in
   \eqref{d-Q-k}), and $b_{i_j}$ is a basis function in  
       \eqref{W-h}, dual to the $j$-th degree of freedom 
   $F_m(p)$ in the $i$-th row of \eqref{d-Q-k}, cf. Figure \ref{T-k}.
We define the following degrees of freedom for the space $V_k(T)$, which also ensure the global 
   $C^1$ continuity, \ cf. Figure \ref{T-k}, by $F_m(p)=$
\an{\label{d-k} &\begin{cases} p(\b x_i), \partial_x p(\b x_i), \partial_y p(\b x_i),
        \partial_{x y} p(\b x_i),   \ & i=1,2,3,4, \\
    p(\frac { j\b x_1+j'\b x_{2}}{k-2}),\;
    \partial_{y} p(\frac { j\b x_1+j'\b x_{2}}{k-2}), & j=1,\dots, k-3, \\ 
    p(\frac { j\b x_2+j'\b x_{3}}{k-2}), \;
    \partial_{x} p(\frac { j\b x_2+j'\b x_{3}}{k-2}), & j=1,\dots, k-3, \\  
     p(\frac { j\b x_4+j'\b x_{3}}{k-2}), \;
    \partial_{y} p(\frac { j\b x_4+j'\b x_{3}}{k-2}), & j=1,\dots, k-3, \\ 
    p(\frac { j\b x_1+j'\b x_{4}}{k-2}), \;
    \partial_{x} p(\frac { j\b x_1+j'\b x_{4}}{k-2}), & j=1,\dots, k-3,\\
     p(\frac {  i\b x_2+j\b x_{4}+(k-5-i-j)\b x_1 }{k-2}), 
            & i=1,\dots,k-7,\\ & j=1,\dots, i, \;k>7. 
    \end{cases} }
Notice that the $\dim P_{k-8}$ internal Lagrange points are located exactly at
  some of $C^1$-$Q_k$ interpolation points in \eqref{d-Q-k}.

\begin{lemma} The degrees of freedom \eqref{d-k} uniquely determine the 
   $V_k(T)$ functions in \eqref{V-k}.
\end{lemma}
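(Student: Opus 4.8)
The plan is to mirror the structure of the previous two lemmas (for $V_4$ and $V_5$): first verify that the dimension of $V_k(T)$ matches the number of degrees of freedom in \eqref{d-k}, reducing uni-solvence to uniqueness; then, given $p\in V_k(T)$ annihilated by all dofs, show $p=0$. For the count, $\dim V_k(T)=\dim P_k+12=\binom{k+2}{2}+12$, while $N_{\text{dof}}=16+8(k-3)+\dim P_{k-8}$ (with the convention $\dim P_{k-8}=0$ for $k<8$), and one checks these agree for every $k\ge 6$. The bulk of the work is the uniqueness argument, which follows the same ``peel off linear factors along the four edges'' template used for $V_4$ and $V_5$.

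Concretely, write $p=p_k+\sum_{j=1}^{12}c_j b_{i_j}$ with $p_k\in P_k(T)$. The twelve bubble functions are, by construction, chosen among the BFS basis functions dual to dofs lying on two adjacent edges (say $\b x_1\b x_4$ and $\b x_4\b x_3$) and to a few vertex dofs; crucially they all vanish (together with the relevant derivatives) at \emph{every} dof appearing in \eqref{d-k} except the twelve they are dual to. Hence, exactly as in the $V_4$/$V_5$ proofs, all the edge and vertex dofs of \eqref{d-k} evaluated on $p$ reduce to the corresponding dofs evaluated on $p_k$ alone. One then argues edge by edge: on $\b x_1\b x_4$ the $(k+1)$ function-value/tangential dofs plus the two normal-derivative and two mixed dofs at the endpoints force $p_k=\lambda_{14}^2 p_{k-2}$ for some $p_{k-2}\in P_{k-2}(T)$ (the degree-4 computation generalizes verbatim, using that $k-3\ge 3$ interior points plus multiplicities at the vertices give more than $k$ zeros after factoring); repeating on $\b x_4\b x_3$, then on $\b x_2\b x_3$ (or whichever two further edges the chosen bubbles cover, matching the $V_4$ pattern of one squared factor on two opposite-ish edges and a single factor elsewhere), we strip $p_k$ down to $p_k=\lambda_{14}^2\lambda_{43}^2\lambda_{23}\,q$ with $q\in P_{k-5}(T)$. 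The remaining dofs to exploit are the interior $\dim P_{k-8}$ Lagrange-type dofs together with one last normal-derivative dof, which must kill $q$: since $q\in P_{k-5}$ and $p_k=\lambda_{14}^2\lambda_{43}^2\lambda_{23}q$, the interior nodal dofs of \eqref{d-k} sit at points where the product of the $\lambda$'s is a fixed nonzero constant, so they force $q$ to vanish at $\dim P_{k-8}$ interpolation points; one must check these points are a unisolvent set for $P_{k-5}$ after the degree drop, or argue iteratively line-by-line as in the first lemma's proof. Finally, with $p_k=0$, evaluating $p$ sequentially at the twelve dofs dual to the $b_{i_j}$ gives $c_1=\dots=c_{12}=0$, so $p=0$.

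The main obstacle is the penultimate step: confirming that the specific twelve bubbles listed in \eqref{V-k} are positioned so that, after peeling the three (or four) linear/quadratic factors along the boundary, the \emph{remaining} degrees of freedom — the $\dim P_{k-8}$ interior nodes plus the handful of leftover boundary derivative dofs not yet used — form a unisolvent system for the residual polynomial space $\lambda_{14}^2\lambda_{43}^2\lambda_{23}\cdot P_{k-5}(T)\cap V_k(T)$. One has to be careful that the interior nodal points, which \eqref{d-k} places at the barycentric lattice points $(i\b x_2+j\b x_4+(k-5-i-j)\b x_1)/(k-2)$, genuinely lie in the interior of $T$ and avoid the three factored-out lines, and that after dividing by those lines the induced nodal set on $P_{k-8}$-space (or on $P_{k-5}$ via the extra derivative dofs) is still unisolvent — the standard $P_m$ Lagrange unisolvence on a triangular lattice will be the tool here, applied to the sub-triangle cut out by the remaining active region. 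I would handle $k=6,7$ (no interior nodes) as explicit small cases exactly parallel to the $V_5$ proof, and then do $k\ge 8$ by the general factor-peeling plus triangular-lattice unisolvence argument; stating the count carefully up front makes the whole thing go through.
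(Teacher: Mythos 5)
Your overall template (dimension count, then factor--peeling for uniqueness, then killing the bubble coefficients) is the right one and matches the paper's strategy, but the core of your uniqueness argument has a genuine gap at exactly the place you flag as ``the main obstacle,'' and the gap is not a technicality that a unisolvence check can repair. You stop the peeling at $p_k=\lambda_{14}^2\lambda_{43}^2\lambda_{23}\,q$ with $q\in P_{k-5}(T)$ and propose to finish using the $\dim P_{k-8}$ interior Lagrange dofs plus ``one last normal-derivative dof.'' This is dimensionally impossible for $k\ge 8$: $\dim P_{k-5}-\dim P_{k-8}=3(k-5)$, so the residual space is far too large for the dofs you have left, and no choice of interpolation points can make that system unisolvent. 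The paper's proof instead continues the factorization through \emph{all four} edges with multiplicity two, reaching $p_k=\lambda_{14}^2\lambda_{43}^2\lambda_{23}^2\lambda_{12}^2\,p_{k-8}$ with $p_{k-8}\in P_{k-8}(T)$; only then do the interior nodes of \eqref{d-k}, which form a standard triangular $P_{k-8}$ Lagrange lattice at which the four squared factors are nonzero, finish the argument. The counting that makes the later peeling steps work is the real content you are missing: on the edges $\b x_2\b x_3$ and $\b x_1\b x_2$ some dofs are consumed by the bubbles $b_{3_2}$ and $b_{2_1},b_{2_2},b_{2_3},b_{2_4},b_{2_6}$, so at each successive step one has one fewer (then two fewer, etc.) edge condition available --- but by then the trace polynomial has correspondingly lower degree, so the remaining conditions still force it to vanish on the whole edge. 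Your sketch never engages with this interplay between the lost dofs and the dropping degree, which is precisely why the twelve bubbles are placed where they are.

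Two smaller corrections: the bubble functions in \eqref{V-k} are dual to dofs on the edges $\b x_1\b x_2$ and $\b x_2\b x_3$ (clustered near $\b x_2$), not on $\b x_1\b x_4$ and $\b x_4\b x_3$ as you state; the peeling must \emph{start} on the two edges carrying no bubble-dual dofs, where the full sets of edge conditions are available, and only afterwards treat the two depleted edges. Also, for $k=6,7$ there are no interior nodes at all, so your ``interior nodes plus one derivative dof'' endgame does not apply there either; the paper handles these uniformly by the same four-edge peeling, with the final factor $p_{k-8}$ simply absent (the product of the eight linear factors already exceeds degree $k$ unless $p_{k-8}=0$), rather than as separate ad hoc cases.
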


\begin{proof} We count the dimension of $V_k$ in \eqref{V-k} and the number $N_{\text{dof}}$ of
   degrees of freedom in \eqref{d-k},
\a{ \dim V_k(T) &= \dim P_k + 12 =\frac{(k+1)(k+2)}2+12  \\
                &=\begin{cases} 40, & k=6, \\
                    48, & k=7, \\
                    \frac 12 k^2+\frac 32 k + 13, \qquad & k\ge 8, \end{cases}\\
    N_{\text{dof}} &=16+8(k-3)+\frac{(k-7)(k-6)}2 \\
                &=\begin{cases} 40, & k=6, \\
                    48, & k=7, \\
                    \frac 12 k^2+\frac 32 k + 13, \qquad & k\ge 8. \end{cases} }
Thus the uni-solvency is determined by uniqueness.

Let $p\in V_k(T)$ in \eqref{V-k} and $F_m(p)=0$ for all degrees of freedom in \eqref{d-k}.
Let \an{\label{p-k} p=p_k+\sum_{j=1}^{12} c_j b_{i_j} \quad \ \t{for some } \ p_k\in P_k(T).  }
Repeating \eqref{p-4-1} and \eqref{p-4-2},  we have 
\a{ p_k= \lambda_{14}^2 p_{k-2} \quad \ \t{for some } \ p_{k-2}\in P_{k-2}(T). }
As $b_{i_j}$ have these $(k-1)$ degrees of freedom vanished,   we   have
\a{ \partial_x  p_{k-2}( \b x_3)&=0, & p_{k-2}(\frac{j\b x_3+(k-2-j)\b x_4}{k-2})&=0, \;
   j=1,\dots,k-2,  }
and consequently $p_{k-2}|_{\b x_4\b x_3}=0$.
We factor out this linear polynomial factor as
\a{ p_{k}= \lambda_{14}^2 \lambda_{43} p_{k-3}  \quad \ \t{for some } \ p_{k-3}\in P_{k-3}(T). }

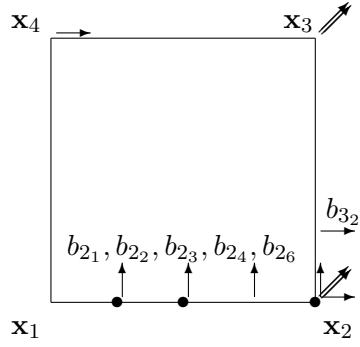
\begin{figure}[H]\centering
 \begin{picture}(140,140)(0,-10) 
 \def\c{\circle*{4}}\def\h{\vector(1,0){13}}\def\v{\vector(0,1){13}}
 \def\d{\multiput(0.5,-0.5)(-1,1){2}{\vector(1,1){11}}}

 \put(0,0){\begin{picture}(100,108)(0,0)   
  \put(0,0){\line(1,0){100}}  \put(0,0){\line(0,1){100}}   
  \put(100,100){\line(-1,0){100}}  \put(100,100){\line(0,-1){100}}  \put(102,2){\v} 
   \put(102,102){\d}  \put(102,2){\h} \put(102,2){\d} 
  \put(2,102){\h}  \put(100,0){\c}
  
    \put(27,2){\v}\put(52,2){\v}
    \put(25,0){\c} \put(50,0){\c}\put(77,2){\v} \put(6,18){$b_{2_1},
        b_{2_2},  b_{2_3}, b_{2_4}, b_{2_6} $}
  
      \put(102,27){\h}  \put(104,32){$  b_{3_2}$}
      \put(-15,-12){$\b x_1$} \put(103,-12){$\b x_2$}
      \put(-15,104){$\b x_4$} \put(88,104){$\b x_3$}
 \end{picture} }

 \end{picture}
 \caption{The 12 bubble functions $\{b_5, b_6, b_7, b_8, b_{2_1}$, $
        b_{2_2},  b_{2_3}, b_{2_4}, b_{2_6}, b_{3_2} \}$
      used to define $C^1$-$P_k$ ($k\ge 6$) serendipity element in \eqref{V-k}. } \label{T-k}
 \end{figure}

Evaluating the normal derivative, we have
\a{ &\quad \ \partial_y p_k(\frac{j\b x_3+(k-2-j)\b x_4}{k-2})\\
    &= \frac {j^2}{(k-2)^2} \cdot 
   \frac{-1} h p_{k-3}(\frac{j\b x_3+(k-2-j)\b x_4}{k-2})\\
   &=0,\qquad j=1,\dots,k-2,
  }and $p_{k-3}|_{\b x_4\b x_3}=0$.
We factor out this linear polynomial factor as
\a{ p_k= \lambda_{14}^2 \lambda_{43}^2 p_{k-4} \quad \ \t{for some } \ p_{k-4}\in P_{k-4}(T). }
We evaluate the function values in the internal points of edge $\b x_2\b x_3$, cf. Figure \ref{T-k},
\a{ &\quad \  p_k(\frac{j\b x_3+(k-2-j)\b x_2}{k-2})\\
    &= 1^2\cdot  \frac {j^2}{(k-2)^2} \cdot p_{k-4}(\frac{j\b x_3+(k-2-j)\b x_2}{k-2})\\
   &=0,\qquad\qquad j=1,\dots,k-3,
  }and $p_{k-4}|_{\b x_2\b x_3}=0$. Thus we have
\a{ p_k= \lambda_{14}^2 \lambda_{43}^2 \lambda_{23} p_{k-5} \quad \ \t{for some }
    \ p_{k-5}\in P_{k-5}(T). }
Evaluating the $x$-derivative degrees of freedom (one less, $b_{3_2}$), cf. Figure \ref{T-k}, we get
\a{  &\quad \  \partial_x p_k(\frac{j\b x_3+(k-2-j)\b x_2}{k-2})\\
    &= 1^2\cdot  \frac {j^2}{(k-2)^2}\cdot\frac 1 h 
          \cdot p_{k-5}(\frac{j\b x_3+(k-2-j)\b x_2}{k-2})\\
   &=0,\qquad\qquad j=2,\dots,k-3,
  }and $p_{k-5}|_{\b x_2\b x_3}=0$.
  
  Factoring out the factor again, we have
\a{ p_k= \lambda_{14}^2 \lambda_{43}^2 \lambda_{23}^2 p_{k-6}
    \quad \ \t{for some } \ p_{k-6}\in P_{k-6}(T). }
Evaluating the function-value degrees of freedom on edge $\b x_1\b x_4$
    (one more than the $y$-derivative degrees of derivative), cf. Figure \ref{T-k}, we get
\a{  &\quad \  p_k(\frac{j\b x_1+(k-2-j)\b x_2}{k-2})\\
    &= 1^2\cdot  \frac {j^2}{(k-2)^2}\cdot \frac{(k-2-j)^2}{(k-2)^2} 
          \cdot p_{k-6}(\frac{j\b x_3+(k-2-j)\b x_2}{k-2})\\
   &=0,\qquad\qquad j=3,\dots,k-3,
  }and $p_{k-6}|_{\b x_1\b x_2}=0$. Thus,
\a{ p_k= \lambda_{14}^2 \lambda_{43}^2 \lambda_{23}^2 \lambda_{12} p_{k-7}
    \quad \ \t{for some } \ p_{k-7}\in P_{k-7}(T). } 
Evaluating the $y$-derivative degrees of freedom on $\b x_1\b x_2$, cf. Figure \ref{T-k}, we get
\a{  &\quad \  \partial_y p_k(\frac{j\b x_1+(k-2-j)\b x_2}{k-2})\\
    &= \frac {j^2}{(k-2)^2} \cdot  \frac {(k-2-j)^2}{(k-2)^2}\cdot\frac 1 h\cdot
          \cdot p_{k-7}(\frac{j\b x_1+(k-2-j)\b x_2}{k-2})\\
   &=0,\qquad\qquad j=4,\dots,k-3,
  }and $p_{k-7}|_{\b x_1\b x_2}=0$. It leads to
\a{ p_k= \lambda_{14}^2 \lambda_{43}^2 \lambda_{23}^2 \lambda_{12}^2 p_{k-8}
    \quad \ \t{for some } \ p_{k-8}\in P_{k-8}(T). } 
As the four factors are positive at the $\dim P_{k-8}$ internal Lagrange nodes in
  the last line of degrees of freedom \eqref{d-k}, and $b_{i_j}$ in \eqref{p-k} vanish
    at these $\dim P_{k-8}$ points in \eqref{d-Q-k}, we have $p_{k-8}=0$ at these points and
    $p_{k-8}=0$. 
 Thus, $p_k=0$ in \eqref{p-k}.

Evaluating $p$ in \eqref{p-k} sequentially at the degrees of freedom of $b_{i_j}$,
  it follows that
  \a{ c_1=\dots=c_{12} = 0, \quad \t{and }\ p=0.  } 
The proof is complete.
\end{proof}

\section{The finite element solution and convergence }

The $C^1$-$P_k$ serendipity finite element space is defined by, for all $k\ge 4$, 
\an{\label{V-h} V_h=\{v_h\in H^2_0(\Omega) : v_h|_T \in V_k(T) \quad \forall T\in \mathcal Q_h \},
   } where $V_k(T)$ is defined in \eqref{V-4}, or \eqref{V-5}, or \eqref{V-k}.

 The finite element discretization of the biharmonic equation \eqref{bi} reads:
   Find $u\in V_h$ such that
\an{\label{finite} (\Delta u, \Delta v) = (f, v) \quad \forall v\in V_h, }
where $V_h$ is defined in \eqref{V-h}.
            
\begin{lemma}  The finite element problem \eqref{finite} has a unique solution.
\end{lemma}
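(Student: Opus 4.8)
The plan is to reduce \eqref{finite} to the Lax--Milgram lemma, for which the only substantial point is coercivity of the symmetric bilinear form $a(u,v)=(\Delta u,\Delta v)$ on $H^2_0(\Omega)$. First I would recall the identity $\int_\Omega (\Delta v)^2\,dx=\sum_{i,j=1}^2\int_\Omega (\partial_i\partial_j v)^2\,dx=|v|_{H^2(\Omega)}^2$, valid for $v\in C^\infty_0(\Omega)$ by integrating by parts twice (moving a $\partial_j$ and then a $\partial_i$ across, with no boundary terms since $v$ has compact support), and then extended to all $v\in H^2_0(\Omega)$ by density of $C^\infty_0(\Omega)$ in $H^2_0(\Omega)$. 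Since $\Omega$ is a bounded polygon, applying the Poincar\'e--Friedrichs inequality twice — once to bound $\|v\|_{L^2}$ by $|v|_{H^1}$ and once to bound $|v|_{H^1}$ by $|v|_{H^2}$, both valid because $v$ and $\nabla v$ vanish on $\partial\Omega$ — yields a constant $c=c(\Omega)>0$ with $a(v,v)\ge c\,\|v\|_{H^2(\Omega)}^2$ for all $v\in H^2_0(\Omega)$. Boundedness $a(u,v)\le\|u\|_{H^2}\|v\|_{H^2}$ is immediate from Cauchy--Schwarz, and $v\mapsto(f,v)$ is a bounded linear functional on $H^2_0(\Omega)$ for $f\in L^2(\Omega)$.

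Second, I would observe that $V_h$ is a (finite-dimensional, hence closed) subspace of $H^2_0(\Omega)$: by \eqref{V-h} each $v_h|_T\in V_k(T)$, the unisolvence Lemmas of the previous sections show a piece is determined by its degrees of freedom, the degrees of freedom on an interior edge are common to the two adjoining rectangles so that the traces of $v_h$ and of $\partial_{\b n}v_h$ agree across every interior edge (giving $C^1$-continuity, so $v_h\in H^2(\Omega)$), and the boundary degrees of freedom vanish in the definition of $V_h$, so $v_h=\partial_{\b n}v_h=0$ on $\partial\Omega$, i.e. $v_h\in H^2_0(\Omega)$. Consequently $a(\cdot,\cdot)$ restricted to $V_h$ is still bounded and coercive, and the Lax--Milgram lemma — here just invertibility of a coercive operator on a finite-dimensional inner-product space — gives a unique $u\in V_h$ solving \eqref{finite}.

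Equivalently, and perhaps more directly, since $V_h$ is finite-dimensional, \eqref{finite} is a square linear system, so it suffices to prove uniqueness: if $u\in V_h$ satisfies $(\Delta u,\Delta v)=0$ for all $v\in V_h$, taking $v=u$ gives $\|\Delta u\|_{L^2(\Omega)}=0$, whence $|u|_{H^2(\Omega)}=0$ by the identity above and then $u\equiv0$ because $u\in H^2_0(\Omega)$. The main obstacle is the coercivity step, i.e. verifying that $\|\Delta\cdot\|_{L^2(\Omega)}$ is a norm on $H^2_0(\Omega)$ equivalent to the full $H^2$-norm; the rest is bookkeeping on the degrees of freedom already supplied by the unisolvence Lemmas.
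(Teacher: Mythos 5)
Your proof is correct, and it takes a somewhat different route from the paper's. The paper also notes that \eqref{finite} is a square linear system and reduces to uniqueness, and after testing with $u_h$ it obtains $\Delta u_h=0$; but it then concludes $u_h=0$ by introducing an auxiliary solution $v$ of a dual problem and integrating by parts (the displayed identity there in effect uses a Poisson-type dual, $\int_\Omega \Delta u_h\,v = -\int_\Omega \nabla u_h\cdot\nabla v = \int_\Omega u_h^2$), so it leans on solvability of an auxiliary boundary-value problem. You instead establish the kernel property directly from the standard identity $\|\Delta v\|_{0}^2=|v|_{2}^2$ on $H^2_0(\Omega)$ (by density of $C^\infty_0$ and two integrations by parts) together with Poincar\'e--Friedrichs applied to $v$ and $\nabla v$; this gives full coercivity of $(\Delta\cdot,\Delta\cdot)$ on $H^2_0(\Omega)$, so Lax--Milgram (or, as you note, simply the square-system argument plus the trivial kernel) yields existence and uniqueness. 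Your version is self-contained, avoids any appeal to an auxiliary problem, and in addition delivers the discrete stability $a(v_h,v_h)\ge c\|v_h\|_{2}^2$ that the paper implicitly uses later in the convergence proof; the paper's version is shorter but, as written, is looser in how the auxiliary $v$ is defined. Both arguments hinge on the same structural fact, namely $V_h\subset H^2_0(\Omega)$, which you justify correctly from the $C^1$ inter-element continuity and the built-in boundary conditions in \eqref{V-h}.
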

                        
\begin{proof}  As \eqref{finite} is a square system of finite linear equations,  we only need to prove the uniqueness.
Let $f=0$ and $v_h=u_h$ in \eqref{finite}.
It follows $\Delta u_h = 0 $ on the domain.  Let $v\in H^2_0(\Omega)$ be the solution
  of \eqref{bi} with $f =\Delta u_h$, as $u_h\in H^2_0(\Omega)$.
Because $u_h\in C^1(\Omega)$,  we have
\a{ 0 &= \int_{\Omega} \Delta u_h v d\b x 
     = \int_{\Omega} -\nabla u_h \nabla v d\b x  
      =\int_{\Omega} (u_h)^2 d\b x.  }
Thus, $u_h=0$. The proof is complete.
\end{proof}

For convergence, the analysis is standard,  as we have $C^1$ conforming finite elements.

\begin{theorem}  Let $ u\in H^{k+1}\cap H^2_0(\Omega)$ be
    the exact solution of the biharmoic equation \eqref{bi}.  
   Let $u_h$ be the $C^1$-$P_k$ finite element solution of \eqref{finite}.   
   Assuming the full-regularity on \eqref{bi}, it holds 
  \a{  \| u- u_h\|_{0} + h^2  |  u- u_h |_{2}  
         & \le Ch^{k+1} | u|_{k+1}, \quad k\ge 6.  } 
\end{theorem}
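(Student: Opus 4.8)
The plan is to follow the standard Céa-plus-duality route for a $C^1$-conforming discretization of the biharmonic problem, since the only problem-specific ingredient that needs to be checked is the local approximation property of the new spaces $V_k(T)$. First I would record that the bilinear form $a(u,v)=(\Delta u,\Delta v)$ is bounded and coercive on $H^2_0(\Omega)$ (coercivity being exactly the two-dimensional biharmonic Poincaré–Friedrichs inequality), so that by Lemma (the well-posedness lemma just above) the discrete problem has a unique solution, and Céa's lemma gives $|u-u_h|_2 \le C \inf_{v_h\in V_h}|u-v_h|_2$. Because $V_h\subset H^2_0(\Omega)$ is genuinely $C^1$-conforming — this is exactly what the degrees of freedom in \eqref{d-k} were designed to guarantee across interelement edges — no consistency error term appears, which is the whole point of the remark "the analysis is standard."

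Next I would establish the local interpolation estimate. Let $\Pi_T : H^{k+1}(T)\to V_k(T)$ be the nodal interpolant associated with the degrees of freedom \eqref{d-k}; these are well-defined on $H^{k+1}(T)$ for $k\ge 6$ since all functionals involve only point values of $u$ and of first- and second-order derivatives, and $H^{k+1}(T)\hookrightarrow C^2(\bar T)$ in 2D once $k+1>3$. By construction $P_k(T)\subset V_k(T)$ and $\Pi_T$ fixes $P_k(T)$, so the Bramble–Hilbert lemma on the reference square, combined with the affine (here, homothetic) scaling $T=h\hat T$ and the usual scaling of Sobolev seminorms, yields
\a{ | u - \Pi_T u |_{j,T} \le C\, h^{k+1-j}\, |u|_{k+1,T}, \qquad j=0,1,2. }
Patching these over $\mathcal Q_h$ and noting that the global interpolant $\Pi_h u$ inherits the $C^1$ matching (again by \eqref{d-k}) and the homogeneous boundary conditions, we get $\inf_{v_h}|u-v_h|_2 \le |u-\Pi_h u|_2 \le C h^{k-1}|u|_{k+1,\Omega}$, hence $|u-u_h|_2\le C h^{k-1}|u|_{k+1}$.

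For the $L^2$ estimate I would run the Aubin–Nitsche duality argument: let $w\in H^2_0(\Omega)$ solve $\Delta^2 w = u-u_h$; full elliptic regularity of \eqref{bi} gives $\|w\|_4 \le C\|u-u_h\|_0$. Then
\a{ \|u-u_h\|_0^2 = a(u-u_h, w) = a(u-u_h, w-\Pi_h w) \le C |u-u_h|_2\, |w-\Pi_h w|_2 \le C h^{k-1}|u|_{k+1}\cdot h^2 \|w\|_4, }
using Galerkin orthogonality and the interpolation estimate with $k$ replaced by $3$ for the smoother dual solution (only $H^4$-regularity is available, so the gain is $h^2$, not $h^{k+1}$). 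Dividing by $\|u-u_h\|_0$ gives $\|u-u_h\|_0 \le C h^{k+1}|u|_{k+1}$, and multiplying the energy estimate by $h^2$ gives $h^2|u-u_h|_2\le C h^{k+1}|u|_{k+1}$; adding the two yields the theorem. The one genuine subtlety — the "hard part," though it is more bookkeeping than difficulty — is verifying that $\Pi_h$ is actually well-defined and globally $C^1$: one must confirm that the edge functionals in \eqref{d-k} restricted to any edge determine $u_h|_e$ and $\partial_n u_h|_e$ from data shared by the two adjacent elements, which follows because each edge carries the $2k-2$ traces and normal-derivative values already present in the $C^1$-$Q_k$ BFS element, and the uni-solvence Lemma for $V_k(T)$ guarantees consistency; I would also note the requirement $k\ge 6$ is what makes the twelve-bubble enrichment suffice, matching the hypothesis of the theorem.
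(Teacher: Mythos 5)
Your proposal is correct and follows essentially the same route as the paper: Galerkin orthogonality (C\'ea) for the energy-norm bound, a nodal interpolant that preserves $P_k(T)\subset V_k(T)$ with optimal-order approximation, and an Aubin--Nitsche duality argument using $H^4$ regularity of the dual problem to gain the extra $h^2$ in $L^2$. The only difference is that you spell out the Bramble--Hilbert/scaling and Sobolev-embedding details for the interpolation operator, which the paper instead delegates to the cited references of Girault--Scott and Scott--Zhang.
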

                        
\begin{proof} As $V_h\subset H^2_0(\Omega)$,  from \eqref{bi} and \eqref{finite},  we get
\a{ (\Delta ( u- u_h), \Delta v_h)=0\quad \forall v_h\in   V_{h }. }
Applying the Schwartz inequality,  we get 
\a{    |   u- u_h|_{2}^2  
     & = C (\Delta(  u-  u_h), \Delta(  u- u_h ))\\ &= C (\Delta(  u-  u_h), \Delta(  u- I_h u))\\ 
     &\le C |   u- u_h|_{2} |   u- I_h u |_{2} \\ 
     & \le Ch^{k-1} |u|_{k+1} |   u- u_h|_{2}  ,} 
      where $ I_h  u$ is the nodal interpolation defined by DOFs in \eqref{d-4} or  \eqref{d-5} or
         \eqref{d-k}.
As $V_k(T)\supset P_k(T)$,  we have $I_h u|_T = u|_T$ if $u\in P_k(T)$, 
  i.e., $I_h$ preserves $P_k$ functions locally.
  Such an interpolation operator is
   $H^2$ stable and consequently of the
    optimal order of convergence, by modifying the standard theory in \cite{Girault,Scott-Zhang}.

For the $L^2$ convergence,  we need an $H^4$ regularity for the dual problem: Find 
  $w\in H^2_0(\Omega)$ such that
  \an{ \label{d2}
    (\Delta w, \Delta v) &=(u-u_h, v), \ \forall v \in H^2_0(\Omega), }
  where \a{ |w|_4 \le C \|u-u_h\|_0 . }
Thus, by \eqref{d2}, \a{ \|u-u_h\|_0^2 &=(\Delta w, \Delta (u-u_h) ) = 
(\Delta (w-w_h), \Delta (u-u_h) ) \\
  & \le C h^2 |w|_{4}  h^{k-1} | u | _{k+1} 
  \\&  \le C h^{ k+1 }   | u | _{k+1} \|u-u_h\|_0. }
                 The proof is complete.
\end{proof}

\section{Numerical Experiments}

In the numerical computation,  we solve the biharmonic equation \eqref{bi}
   on the unit square domain $\Omega=(0,1)\times(0,1)$. 
We choose an $f$ in \eqref{bi} so that the exact solution is
\an{\label{s2}
   u =\sin^2(\pi x)\sin^2(\pi y).  }  
   
\begin{figure}[H]
\begin{center}\setlength\unitlength{1.0pt}\centering
\begin{picture}(330,115)(0,0) \put(5,101){$G_1:$}  \put(115,101){$G_2:$} \put(225,101){$G_3:$} 
\put(0,-2){ \begin{picture}(100,100)(0,0) \multiput(0,0)(100,0){2}{\line(0,1){100}}
       \multiput(0,0)(0,100){2}{\line(1,0){100}} \end{picture} }
\put(110,-2){ \begin{picture}(100,100)(0,0) \multiput(0,0)(50,0){3}{\line(0,1){100}}
       \multiput(0,0)(0,50){3}{\line(1,0){100}} \end{picture} }
\put(220,-2){ \begin{picture}(100,100)(0,0) \multiput(0,0)(25,0){5}{\line(0,1){100}}
       \multiput(0,0)(0,25){5}{\line(1,0){100}} \end{picture} }
\end{picture}\end{center}
\caption{The first three square grids for computing  \eqref{s2} in Tables \ref{t1}--\ref{t5}. }
\label{f-21}
\end{figure}
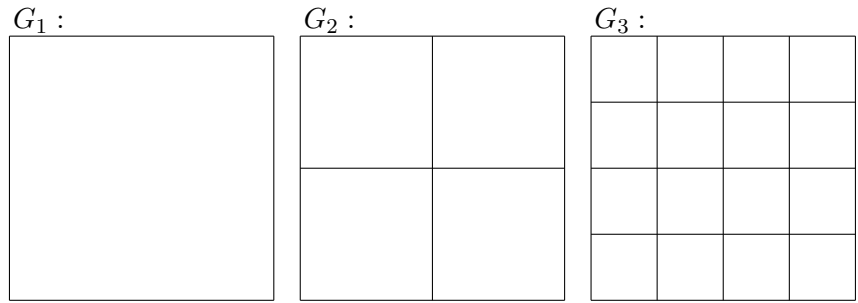

 We compute the solution \eqref{s2} on the square grids shown in Figure \ref{f-21}, by 
  the newly constructed $C^1$-$P_k$, $k=4,5,6,7,8$, serendipity finite elements \eqref{V-h}.
The results are listed in Tables \ref{t1}--\ref{t5}, where we can see that the optimal orders of convergence 
  are achieved in all cases.  
Additionally, we computed the corresponding $C^1$-$Q_k$ BFS finite element solutions in
  these tables.
The two solutions are about equally good. But the $P_4$ serendipity finite element saves about
   $1/10$ of unknowns comparing to the $Q_4$ element in Table \ref{t1}.
When $k$ is large, the global space of the $P_k$ serendipity finite element is
 about 1/2 of the size of that of the $Q_k$ BFS finite element.
In the last row of some tables, the computer accuracy is reached, i.e., the round-off error is
  more than the  truncation error.

\begin{table}[H]
  \centering  \renewcommand{\arraystretch}{1.1}
  \caption{Error profile on the square meshes shown as in Figure \ref{f-21}, 
     for computing \eqref{s2}. }
  \label{t1}
\begin{tabular}{c|cc|cc|r}
\hline
grid & \multicolumn{2}{c|}{ $\| u-u_h\|_{0}$  \; $O(h^r)$}  
  &  \multicolumn{2}{c}{  $|u-u_h|_{2}$ \;$O(h^r)$} & $\dim V_h$  \\ \hline
    &  \multicolumn{5}{c}{ By the $C^1$-$Q_4$ BFS element \eqref{W-h}. }   \\
\hline   
 1&    0.837E-01 &  0.0&    0.287E+01 &  0.0 &       25\\
 2&    0.939E-02 &  3.2&    0.161E+01 &  0.8 &       64\\
 3&    0.150E-03 &  6.0&    0.147E+00 &  3.5 &      196\\
 4&    0.461E-05 &  5.0&    0.184E-01 &  3.0 &      676\\
 5&    0.143E-06 &  5.0&    0.231E-02 &  3.0 &     2500\\
 6&    0.447E-08 &  5.0&    0.288E-03 &  3.0 &     9604\\
 7&    0.162E-09 &  4.8&    0.360E-04 &  3.0 &    37636\\
\hline 
    &  \multicolumn{5}{c}{ By the $C^1$-$P_4$ serendipity element \eqref{V-h}. }   \\
\hline   
 1&    0.375E+00 &  0.0&    0.174E+02 &  0.0 &       24\\
 2&    0.468E-01 &  3.0&    0.470E+01 &  1.9 &       60\\
 3&    0.704E-03 &  6.1&    0.239E+00 &  4.3 &      180\\
 4&    0.111E-04 &  6.0&    0.212E-01 &  3.5 &      612\\
 5&    0.290E-06 &  5.3&    0.248E-02 &  3.1 &     2244\\
 6&    0.869E-08 &  5.1&    0.306E-03 &  3.0 &     8580\\
 7&    0.382E-09 &  4.5&    0.381E-04 &  3.0 &    33540\\
\hline 
    \end{tabular}%
\end{table}%

\begin{table}[H]
  \centering  \renewcommand{\arraystretch}{1.1}
  \caption{Error profile on the square meshes shown as in Figure \ref{f-21}, 
     for computing \eqref{s2}. }
  \label{t2}
\begin{tabular}{c|cc|cc|r}
\hline
grid & \multicolumn{2}{c|}{ $\| u-u_h\|_{0}$  \; $O(h^r)$}  
  &  \multicolumn{2}{c}{  $|u-u_h|_{2}$ \;$O(h^r)$} & $\dim V_h$  \\
   \hline
    &  \multicolumn{5}{c}{ By the $C^1$-$Q_5$ BFS element \eqref{W-h}. }   \\
\hline   
 1&    0.324E-01 &  0.0&    0.435E+01 &  0.0 &       36\\
 2&    0.138E-03 &  7.9&    0.918E-01 &  5.6 &      100\\
 3&    0.789E-05 &  4.1&    0.146E-01 &  2.7 &      324\\
 4&    0.130E-06 &  5.9&    0.912E-03 &  4.0 &     1156\\
 5&    0.206E-08 &  6.0&    0.570E-04 &  4.0 &     4356\\
 6&    0.302E-10 &  6.1&    0.356E-05 &  4.0 &    16900\\
\hline 
    &  \multicolumn{5}{c}{ By the $C^1$-$P_5$ serendipity element \eqref{V-h}. }   \\
\hline   
 1&    0.375E+00 &  0.0&    0.136E+02 &  0.0 &       32\\
 2&    0.433E-01 &  3.1&    0.459E+01 &  1.6 &       84\\
 3&    0.419E-03 &  6.7&    0.227E+00 &  4.3 &      260\\
 4&    0.492E-05 &  6.4&    0.106E-01 &  4.4 &      900\\
 5&    0.697E-07 &  6.1&    0.562E-03 &  4.2 &     3332\\
 6&    0.103E-08 &  6.1&    0.323E-04 &  4.1 &    12804\\
\hline 
    \end{tabular}%
\end{table}%

\begin{table}[H]
  \centering  \renewcommand{\arraystretch}{1.1}
  \caption{Error profile on the square meshes shown as in Figure \ref{f-21}, 
     for computing \eqref{s2}. }
  \label{t3}
\begin{tabular}{c|cc|cc|r}
\hline
grid & \multicolumn{2}{c|}{ $\| u-u_h\|_{0}$  \; $O(h^r)$}  
  &  \multicolumn{2}{c}{  $|u-u_h|_{2}$ \;$O(h^r)$} & $\dim V_h$  \\
   \hline
    &  \multicolumn{5}{c}{ By the $C^1$-$Q_6$ BFS element \eqref{W-h}. }   \\
\hline   
 1&    0.157E-02 &  0.0&    0.802E+00 &  0.0 &       49\\
 2&    0.706E-04 &  4.5&    0.499E-01 &  4.0 &      144\\
 3&    0.394E-06 &  7.5&    0.115E-02 &  5.4 &      484\\
 4&    0.310E-08 &  7.0&    0.360E-04 &  5.0 &     1764\\
 5&    0.258E-10 &  6.9&    0.113E-05 &  5.0 &     6724\\
\hline 
    &  \multicolumn{5}{c}{ By the $C^1$-$P_6$ serendipity element \eqref{V-h}. }   \\
\hline   
 1&    0.375E+00 &  0.0&    0.137E+02 &  0.0 &       40\\
 2&    0.131E-01 &  4.8&    0.158E+01 &  3.1 &      108\\
 3&    0.222E-03 &  5.9&    0.370E-01 &  5.4 &      340\\
 4&    0.208E-05 &  6.7&    0.800E-03 &  5.5 &     1188\\
 5&    0.169E-07 &  6.9&    0.197E-04 &  5.3 &     4420\\
\hline 
    \end{tabular}%
\end{table}%

\begin{table}[H]
  \centering  \renewcommand{\arraystretch}{1.1}
  \caption{Error profile on the square meshes shown as in Figure \ref{f-21}, 
     for computing \eqref{s2}. }
  \label{t4}
\begin{tabular}{c|cc|cc|r}
\hline
grid & \multicolumn{2}{c|}{ $\| u-u_h\|_{0}$  \; $O(h^r)$}  
  &  \multicolumn{2}{c}{  $|u-u_h|_{2}$ \;$O(h^r)$} & $\dim V_h$  \\
   \hline
    &  \multicolumn{5}{c}{ By the $C^1$-$Q_7$ BFS element \eqref{W-h}. }   \\
\hline   
 1&    0.115E-02 &  0.0&    0.379E+00 &  0.0 &       64\\
 2&    0.964E-06 & 10.2&    0.253E-02 &  7.2 &      196\\
 3&    0.183E-07 &  5.7&    0.763E-04 &  5.0 &      676\\
 4&    0.731E-10 &  8.0&    0.119E-05 &  6.0 &     2500\\ 
 5&    0.158E-10 &  2.2&    0.185E-07 &  6.0 &     9604\\
\hline 
    &  \multicolumn{5}{c}{ By the $C^1$-$P_7$ serendipity element \eqref{V-h}. }   \\
\hline   
 1&    0.375E+00 &  0.0&    0.140E+02 &  0.0 &       48\\
 2&    0.380E-02 &  6.6&    0.430E+00 &  5.0 &      132\\
 3&    0.668E-05 &  9.2&    0.426E-02 &  6.7 &      420\\
 4&    0.247E-07 &  8.1&    0.541E-04 &  6.3 &     1476\\ 
 5&    0.313E-10 &  9.6&    0.735E-06 &  6.2 &     5508\\
\hline 
    \end{tabular}%
\end{table}%

\begin{table}[H]
  \centering  \renewcommand{\arraystretch}{1.1}
  \caption{Error profile on the square meshes shown as in Figure \ref{f-21}, 
     for computing \eqref{s2}. }
  \label{t5}
\begin{tabular}{c|cc|cc|r}
\hline
grid & \multicolumn{2}{c|}{ $\| u-u_h\|_{0}$  \; $O(h^r)$}  
  &  \multicolumn{2}{c}{  $|u-u_h|_{2}$ \;$O(h^r)$} & $\dim V_h$  \\
   \hline
    &  \multicolumn{5}{c}{ By the $C^1$-$Q_8$ BFS element \eqref{W-h}. }   \\
\hline   
 1&    0.531E-04 &  0.0&    0.716E-01 &  0.0 &       81\\
 2&    0.546E-06 &  6.6&    0.743E-03 &  6.6 &      256\\
 3&    0.755E-09 &  9.5&    0.433E-05 &  7.4 &      900\\
 4&    0.557E-11 &  7.1&    0.334E-07 &  7.0 &     3364\\
\hline 
    &  \multicolumn{5}{c}{ By the $C^1$-$P_8$ serendipity element \eqref{V-h}. }   \\
\hline   
 1&    0.465E-01 &  0.0&    0.389E+01 &  0.0 &       57\\
 2&    0.365E-03 &  7.0&    0.465E-01 &  6.4 &      160\\
 3&    0.133E-05 &  8.1&    0.421E-03 &  6.8 &      516\\
 4&    0.229E-08 &  9.2&    0.292E-05 &  7.2 &     1828\\
\hline 
    \end{tabular}%
\end{table}%

\end{document}